\newtheorem{example}{Example}[section]
\newtheorem{theorem}{Theorem}[section]
\newtheorem{lemma}{Lemma}[section]
\DeclareMathOperator{\var}{var}
\begin{document}
\title{Bernoulli Convolutions and 1D Dynamics}
\author{Tom Kempton \& Tomas Persson}
\maketitle

\begin{abstract}{\noindent We describe a family $\phi_{\lambda}$ of dynamical systems on the unit interval which preserve Bernoulli convolutions. We show that if there are parameter ranges for which these systems are piecewise convex, then the corresponding Bernoulli convolution will be absolutely continuous with bounded density. We study the systems $\phi_{\lambda}$ and give some numerical evidence to suggest values of $\lambda$ for which $\phi_{\lambda}$ may be piecewise convex.
}
\end{abstract}

\section{Introduction}
In the study of self similar measures corresponding to non-overlapping iterated function systems, there is a natural way of defining an expanding dynamical system which preserves the measure and which allows one to study various properties such as dimension. The case of self similar measures with overlaps is much more involved, and it is not clear how best to study them using dynamical systems. 

Bernoulli convolutions are a particularly well studied family of self-similar measures. For each $\lambda\in(0,1)$ we define the corresponding Bernoulli convolution $\nu_{\lambda}$ to be the distribution of the series
\[
(\lambda^{-1}-1)\sum_{i=1}^{\infty} a_i\lambda^i
\]
where the digits $a_i$ are picked independently from digit set $\{0,1\}$ with probability $\frac{1}{2}$. Equivalently, Bernoulli convolutions are the unique probability measures satisfying the self similarity relation
\[
\nu_{\lambda}=\frac{1}{2}(\nu_{\lambda}\circ T_0+\nu_{\lambda}\circ T_1),
\]
where the maps $T_i:\mathbb R\to\mathbb R$ are defined by $T_i(x)=\frac{x}{\lambda}-(\lambda^{-1}-1)i$. For $\lambda\in(0,\frac{1}{2})$, the self similar measures are generated by a non-overlapping iterated function system, and are invariant under the interval maps $\phi_{\lambda}$ given by
\[
\phi_{\lambda}(x)=\left\lbrace\begin{array}{cc} \lambda^{-1}x &x\in[0,\lambda]\\
                         1 & x\in(\lambda,1-\lambda)\\
1-\lambda^{-1}(x-(1-\lambda))& x\in (1-\lambda,1)\end{array}\right..
\]

\begin{figure}[ht]
\hspace{\stretch{1}}
\begin{tikzpicture}[scale=3]
\draw(0,0)node[below]{\scriptsize 0}--(1,0)node[below]{\scriptsize $1$}--(1,1)--(0,1)--(0,0);
\draw[thick](0,0)--(.2,1)--(0.8,1)--(1,0);
\end{tikzpicture}
\hspace{\stretch{1}}
\begin{tikzpicture}[scale=3]
\draw(0,0)node[below]{\scriptsize 0}--(1,0)node[below]{\scriptsize $1$}--(1,1)--(0,1)--(0,0);
\draw[thick](0,0)--(.4,1)--(0.6,1)--(1,0);] 
\end{tikzpicture}
\hspace{\stretch{1}}
\begin{tikzpicture}[scale=3]
\draw(0,0)node[below]{\scriptsize 0}--(1,0)node[below]{\scriptsize $1$}--(1,1)--(0,1)--(0,0);
\draw[thick](0,0)--(.5,1)--(1,0);] 
\end{tikzpicture}
\hspace{\stretch{1}}
\caption{The maps $\phi_{\lambda}$ for $\lambda$ equal to $0.2,$ $0.4$ and $0.5$ respectively}
\end{figure}
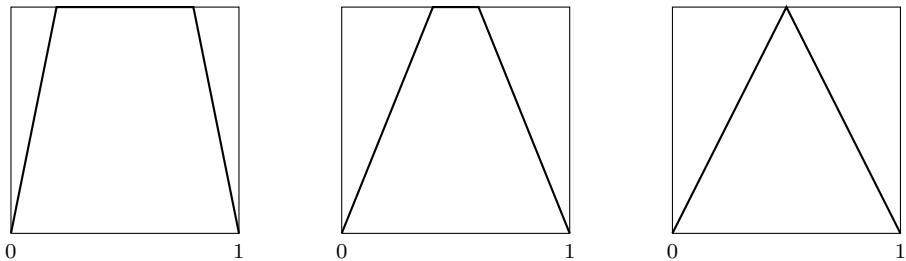

The main aim of this article is to extend the definition of $\phi_{\lambda}$ to the overlapping case, when $\lambda\in(\frac{1}{2},1)$, and to study $\nu_{\lambda}$ using these interval maps. 

There are a number of long standing open questions about Bernoulli convolutions, chief among which is the question of for which parameters $\lambda$ the corresponding measure $\nu_{\lambda}$ is absolutely continuous. It is known that each Bernoulli convolution is either purely singular or absolutely continuous, see \cite{JessenWintner}. If $\lambda$ is the inverse of a Pisot number then $\nu_{\lambda}$ is singular, see \cite{ErdosPisot}, and in fact has Hausdorff dimension less than one, \cite{Lalley}. In \cite{GarsiaAC} Garsia gave a small, explicitly defined class of algebraic integers for which $\nu_{\lambda}$ is known to be absolutely continuous. Solomyak proved in \cite{SolomyakAC} that $\nu_{\lambda}$ is absolutely continuous for almost every $\lambda\in(\frac{1}{2},1)$, and in \cite{ShmerkinAC} Shmerkin proved that the set of $\lambda\in(\frac{1}{2},1)$ admitting singular Bernoulli convolutions has Hausdorff dimension $0$, but the question of determining the parameters $\lambda$ which admit absolutely continuous Bernoulli convolutions remains open. 

There are other interesting open questions regarding Bernoulli convolutions. For example, is it the case that any singular Bernoulli convolution must have Hausdorff dimension less than one? Do there exist intervals in the parameter space for which every Bernoulli convolution is absolutely continuous (and even has continuous density)? Does the density evolve continuously with $\lambda$?

Similar questions exist in the study of invariant measures associated to various one parameter families of interval maps, and in this area a good deal of progress has been made \cite{Dobbs, FreitasTodd, Ledrappier}. With this in mind, we extend the definition of the generalised tent maps $\phi_{\lambda}$ to the overlapping case. These tent maps preserve the corresponding Bernoulli convolutions $\nu_{\lambda}$. They are described implicitly in terms of the distribution $F_{\lambda}$ of $\nu_{\lambda}$, and while we are able to write down explicit formulae for the $\phi_{\lambda}$ only in some special cases, we are able to prove some general properties. 

In particular, we prove that if $\phi_{\lambda}$ is piecewise convex for all $\lambda$ in some interval $(a,b)$ then the corresponding Bernoulli convolution is absolutely continuous with bounded density. For each $x\in[0,1]$ the map $x\mapsto\phi_{\lambda}(x)$ is continuous in $\lambda$, and convexity is preserved by passing to limits in a continuous family of functions. Thus, piecewise convexity of the functions $\phi_{\lambda}$ seems like an appropriate vehicle for passing from almost everywhere absolute continuity to everywhere absolute continuity for parameters in certain ranges. We can show that $\phi_{\lambda}$ is piecewise convex for certain special cases, and remain optimistic that one may be able to prove analytically that the map $\phi_{\lambda}$ is piecewise convex in certain parameter ranges. For the moment however, our results on piecewise convexity are restricted to some special values of $\lambda$, although we are able to run numerical approximations for any $\lambda$. There have been previous numerical investigations into Bernoulli convolutions, we mention in particular the work of Benjamini and Solomyak \cite{BenjaminiSolomyak} and of Calkin et al \cite{Calkin1, Calkin2}.

In the next section we define the maps $\phi_{\lambda}$ in which we are interested and prove that they preserve Bernoulli convolutions. We prove some elementary properties of the maps $\phi_{\lambda}$ and give the maps explicitly in some special cases. In section 3 we prove various properties of $\nu_{\lambda}$ that would follow from $\phi_{\lambda}$ being piecewise convex, and in section 4 we give some numerical evidence on the piecewise convexity of $\phi_{\lambda}$. Finally in section 5 we state some further questions and conjectures.

\section{Generalised Tent Maps}

Let $F_{\lambda}:[0,1]\to[0,1]$ be the distribution of
$\nu_{\lambda}$,
i.e.\ $F_{\lambda}(x):=\nu_{\lambda}[0,x]$. $F_{\lambda}$ is strictly
increasing because $\nu_{\lambda}$ is fully supported. We define a map
$\phi_{\lambda}:[0,1]\to [0,1]$ by
\[
\phi_{\lambda}(x) = \left\lbrace \begin{array}{cc}
  F_{\lambda}^{-1}(2F_{\lambda}(x)) & x \in
  [0,\frac{1}{2}]\\ F_{\lambda}^{-1}(2F_{\lambda} (1-x)) & x \in
  [\frac{1}{2},1] \end{array}\right. .
\]
Since $F_{\lambda}$ is strictly increasing on $[0,1]$, the map
$\phi_{\lambda}$ is well defined. We will see later that
$\phi_{\lambda}$ preserves $\nu_{\lambda}$.

\begin{figure}[ht]
\centerline{\includegraphics[scale=0.80]{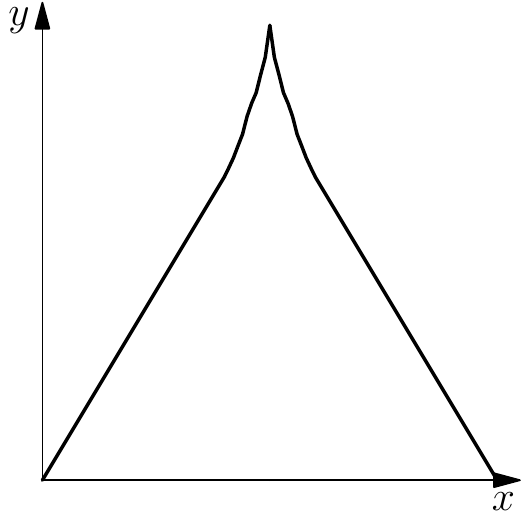} \quad
\includegraphics[scale=0.80]{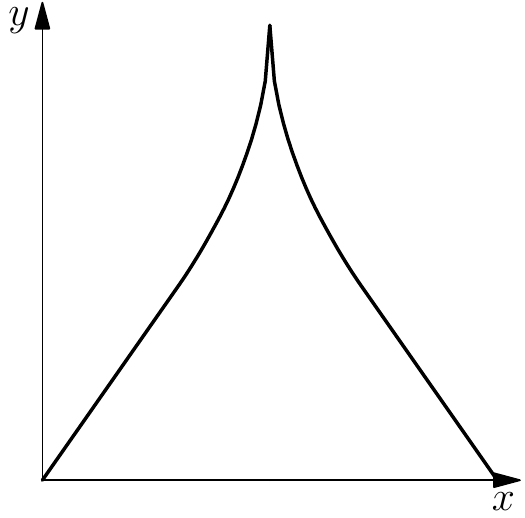} \quad
\includegraphics[scale=0.80]{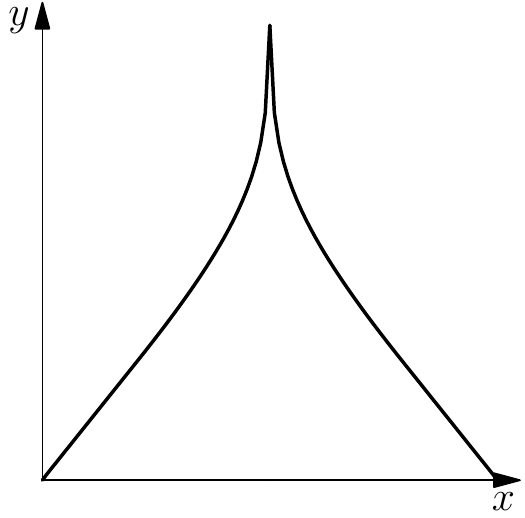}}
\caption{Graphs of $\phi_\lambda$ for $\lambda = 0.6, 0.7$ and $0.8$.}
\label{fig:tentmaps}
\end{figure}

The map $\phi_{\lambda}$ will be the chief object of study for this
article. Since $F_{\lambda}$ can be well approximated numerically, to
known levels of accuracy, one can gain good numerical approximations
to the maps $\phi_{\lambda}$. Three such approximations are displayed
for different values of $\lambda$ in Figure~\ref{fig:tentmaps}.

We begin by observing some simple properties of $\phi_{\lambda}$.

\begin{lemma}
The map $\phi_{\lambda}$ has the following properties for $\lambda\in(\frac{1}{2},1)$.
\begin{enumerate}
 \item $\phi_{\lambda}(0)=\phi_{\lambda}(1)=0$.
 \item $\phi_{\lambda}\left(\frac{1}{2}\right)=1$.
 \item $\phi_{\lambda}$ is strictly increasing on $\left[0,\frac{1}{2}\right]$ and strictly decreasing on $\left[\frac{1}{2},1\right]$.
 \item $\phi_{\lambda}(x)=\phi_{\lambda}\left(1-x\right)$.
\item $\phi_{\lambda}$ is continuous.
\end{enumerate}

\begin{proof}
We have that $F_{\lambda}(0)=0, F_{\lambda}(\frac{1}{2})=\frac{1}{2}$ and $F_{\lambda}(1)=1$ because $\nu_{\lambda}$ is supported on $[0,1]$ and symmetric about the point $\frac{1}{2}$. Then points $1$ and $2$ follow immediately.

Part 4 can be seen to be true by looking at the piecewise definition of $\phi_{\lambda}$.  Because $\nu_{\lambda}[a,b]>0$ for each $0\leq a<b\leq 1$ we have that $F_{\lambda}$ is strictly increasing. Consequently $\phi_{\lambda}$ is strictly increasing on $\left[0,\frac{1}{2}\right]$ (and strictly decreasing on $[\frac{1}{2},1]$).

Finally, we observe that continuity of $\phi_{\lambda}$ follows from the fact that $\nu_{\lambda}$ is non-atomic and that $\nu_{\lambda}[a,b]>0$ for each $0\leq a<b\leq 1$. Then both $F_{\lambda}$ and $F_{\lambda}^{-1}$ are uniformly continuous, and so $\phi_{\lambda}$ is continuous in $x$.
\end{proof}

\end{lemma}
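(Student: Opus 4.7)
The plan is to derive all five assertions from four elementary facts about $\nu_\lambda$ that are already in use in the paper: it is supported on $[0,1]$, it is symmetric about $\tfrac{1}{2}$, it is non-atomic, and it gives positive mass to every non-degenerate subinterval of $[0,1]$. These immediately translate to $F_\lambda(0)=0$, $F_\lambda(\tfrac{1}{2})=\tfrac{1}{2}$, $F_\lambda(1)=1$, together with strict monotonicity and continuity of $F_\lambda$, hence also of $F_\lambda^{-1}:[0,1]\to[0,1]$.

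First I would dispatch parts 1 and 2 by direct substitution into the definition. For $x=0$ the top branch gives $F_\lambda^{-1}(2F_\lambda(0))=F_\lambda^{-1}(0)=0$, and for $x=1$ the bottom branch gives $F_\lambda^{-1}(2F_\lambda(0))=0$; at $x=\tfrac{1}{2}$, either branch evaluates to $F_\lambda^{-1}(2F_\lambda(\tfrac{1}{2}))=F_\lambda^{-1}(1)=1$. Part 4 is then immediate from the piecewise definition: for $x\in[\tfrac{1}{2},1]$ we have $1-x\in[0,\tfrac{1}{2}]$, and the formula for $\phi_\lambda(x)$ on the right half coincides with the formula defining $\phi_\lambda(1-x)$ on the left half.

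For part 3, I would use that $F_\lambda$ is strictly increasing (from $\nu_\lambda[a,b]>0$ for $a<b$) and that $F_\lambda^{-1}$ inherits strict monotonicity. On $[0,\tfrac{1}{2}]$, $\phi_\lambda$ is the composition $F_\lambda^{-1}\circ (2\,\cdot)\circ F_\lambda$ of three strictly increasing maps, hence strictly increasing; on $[\tfrac{1}{2},1]$ the extra factor $x\mapsto 1-x$ reverses monotonicity, yielding a strictly decreasing function. For part 5, continuity of $F_\lambda$ follows from $\nu_\lambda$ being non-atomic, so $F_\lambda:[0,1]\to[0,1]$ is a continuous strictly increasing bijection of a compact interval, hence a homeomorphism, and $F_\lambda^{-1}$ is continuous. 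Both branches of $\phi_\lambda$ are therefore continuous on their respective closed intervals, and they agree at the joining point $x=\tfrac{1}{2}$ by the computation above.

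I do not expect any genuine obstacle: the only subtle point is that continuity of $\phi_\lambda$ at $x=\tfrac{1}{2}$ is not automatic from piecewise continuity and must be checked, but this reduces to the single identity $F_\lambda(\tfrac{1}{2})=\tfrac{1}{2}$. All other parts are mechanical once the boundary values and the strict-monotonicity/continuity of $F_\lambda$ are recorded.
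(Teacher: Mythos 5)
Your proposal is correct and follows essentially the same route as the paper: both arguments reduce everything to the boundary values $F_\lambda(0)=0$, $F_\lambda(\tfrac{1}{2})=\tfrac{1}{2}$, $F_\lambda(1)=1$ together with the strict monotonicity and continuity of $F_\lambda$ (hence of $F_\lambda^{-1}$) coming from full support and non-atomicity. Your explicit check that the two branches agree at $x=\tfrac{1}{2}$ is a small point the paper leaves implicit, but it is the same proof.
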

We call maps satisfying the above properties generalised tent maps. Our first theorem is the following
\begin{theorem}
Let $\lambda\in(\frac{1}{2},1)$. Then $\nu_{\lambda}$ is invariant under $\phi_{\lambda}$. 
\end{theorem}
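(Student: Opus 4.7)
The plan is to show that $\phi_\lambda$ is measurably conjugate, via the distribution function $F_\lambda$, to the standard tent map $T\colon [0,1]\to[0,1]$ given by $T(y)=2y$ on $[0,1/2]$ and $T(y)=2-2y$ on $[1/2,1]$, and then to use the fact that $T$ preserves Lebesgue measure on $[0,1]$.

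First I would record the fundamental transport identity $(F_\lambda)_*\nu_\lambda = \mathrm{Leb}_{[0,1]}$. This is immediate from the definition $F_\lambda(x)=\nu_\lambda[0,x]$ together with the fact, already established in the preceding lemma's argument, that $F_\lambda$ is a strictly increasing continuous bijection of $[0,1]$ to itself. Equivalently, $(F_\lambda^{-1})_*\mathrm{Leb}=\nu_\lambda$.

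Next I would verify the conjugacy relation $F_\lambda\circ\phi_\lambda = T\circ F_\lambda$. For $x\in[0,1/2]$ this is immediate: $F_\lambda(\phi_\lambda(x))=F_\lambda(F_\lambda^{-1}(2F_\lambda(x)))=2F_\lambda(x)$, and since $F_\lambda(x)\le F_\lambda(1/2)=1/2$ this equals $T(F_\lambda(x))$. For $x\in[1/2,1]$ I would use the symmetry of $\nu_\lambda$ about $1/2$, which gives $F_\lambda(1-x)=1-F_\lambda(x)$; then $F_\lambda(\phi_\lambda(x))=2F_\lambda(1-x)=2-2F_\lambda(x)$, and since now $F_\lambda(x)\ge 1/2$ this is $T(F_\lambda(x))$.

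Finally I would combine these ingredients: since $T$ preserves Lebesgue measure (an elementary computation, or by noting that each $y\in(0,1)$ has two $T$-preimages $y/2$ and $1-y/2$ with Jacobian $1/2$), we get
\[
(\phi_\lambda)_*\nu_\lambda
=(F_\lambda^{-1})_*(T)_*(F_\lambda)_*\nu_\lambda
=(F_\lambda^{-1})_*T_*\mathrm{Leb}
=(F_\lambda^{-1})_*\mathrm{Leb}
=\nu_\lambda,
\]
which is the desired invariance. There is no real obstacle here: the only point that requires care is correctly using the symmetry $F_\lambda(1-x)=1-F_\lambda(x)$ on the right half of the interval so that the two branches of $\phi_\lambda$ both conjugate to the correct branch of $T$. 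A reader who prefers a hands-on argument can equivalently check invariance on intervals $[a,b]$ directly: the two preimage branches contribute $\nu_\lambda$-mass $\tfrac12(F_\lambda(b)-F_\lambda(a))$ each, summing to $F_\lambda(b)-F_\lambda(a)=\nu_\lambda[a,b]$.
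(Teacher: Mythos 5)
Your proof is correct, but it is packaged differently from the paper's. The paper verifies invariance directly on sets of the form $[a,1]$: it identifies $\phi_\lambda^{-1}[a,1]=[b,1-b]$ with $\phi_\lambda(b)=a$ and computes $\nu_\lambda[b,1-b]=1-2F_\lambda(b)=1-F_\lambda(a)=\nu_\lambda[a,1]$ using the symmetry of $\nu_\lambda$ about $\tfrac12$. You instead make explicit the conjugacy $F_\lambda\circ\phi_\lambda=T\circ F_\lambda$ with the standard tent map $T$, together with the transport identity $(F_\lambda)_*\nu_\lambda=\mathrm{Leb}$, and push measures through the conjugacy. The underlying computations are essentially the same (your verification of the conjugacy on $[1/2,1]$ uses exactly the same symmetry $F_\lambda(1-x)=1-F_\lambda(x)$ that the paper uses), but your framing buys more: it exhibits $([0,1],\phi_\lambda,\nu_\lambda)$ as measure-theoretically isomorphic to the tent map with Lebesgue measure, from which ergodicity, exactness and entropy $\log 2$ follow for free. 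The paper's argument is slightly more elementary and self-contained, needing only the one-parameter family of sets $[a,1]$ rather than a change-of-variables for pushforwards. Both are complete; no gaps.
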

\begin{proof}
It is enough to show that for each $a\in[0,1]$ we have that 
\[\nu_{\lambda}[a,1]=\nu_{\lambda}(\phi_{\lambda}^{-1}[a,1]).\]
To prove this, we note that 
\[\phi_{\lambda}^{-1}[a,1]=[b,1-b]\]
where $b\in[0,\frac{1}{2}]$ satisfies $\phi_{\lambda}(b)=a$. But then
\begin{eqnarray*}
\nu_{\lambda}[b,1-b]&=&2\nu_{\lambda}[b,\frac{1}{2}]\\
&=& 2(F_{\lambda}(\frac{1}{2})-F_{\lambda}(b))\\
&=& 1-2F_{\lambda}(b)\\
&=& F_{\lambda}(1)-F_{\lambda}(\phi_{\lambda}(b))\\
&=&F_{\lambda}(1)-F_{\lambda}(a)\\
&=& \nu_{\lambda}[a,1],
\end{eqnarray*}
as required.\end{proof}

Thus, if $\nu_{\lambda}$ is absolutely continuous we see that
$\phi_{\lambda}$ has an acip. We have not been able to prove the
converse statement, that $\phi_{\lambda}$ does not have an acip in the
case that $\nu_{\lambda}$ is singular, this would be a useful
statement which would make the relationship between the study of
$\phi_{\lambda}$ and the measures $\nu_{\lambda}$ a little more
straightforward.

The following theorem shows that the maps $\phi_{\lambda}$ evolve continuously in $\lambda$.

\begin{theorem}
For each $x\in[0,1],\lambda_0\in(\frac{1}{2},1)$ we have that $\phi_{\lambda}(x)\to \phi_{\lambda_0}(x)$ as $\lambda\to\lambda_0$.
\end{theorem}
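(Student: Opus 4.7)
The plan is to reduce pointwise continuity of $\phi_\lambda$ in $\lambda$ to uniform continuity of the distribution function $F_\lambda$ and its inverse in $\lambda$, and then plug this into the piecewise definition of $\phi_\lambda$.

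First I would establish weak convergence $\nu_\lambda \to \nu_{\lambda_0}$ as $\lambda \to \lambda_0$. Realize all the $\nu_\lambda$ simultaneously on a common probability space carrying i.i.d.\ Bernoulli$(1/2)$ digits $(a_i)$, as the laws of the random variables $X_\lambda := (\lambda^{-1}-1)\sum_{i\ge 1} a_i\lambda^i$. For each fixed realization of $(a_i)$, the map $\lambda \mapsto X_\lambda$ is continuous on $(0,1)$ (a convergent power series times a smooth factor), and $X_\lambda \in [0,1]$ uniformly. Dominated convergence then yields $\mathbb{E}[f(X_\lambda)] \to \mathbb{E}[f(X_{\lambda_0})]$ for every bounded continuous $f$, i.e.\ weak convergence.

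Second, since $\nu_{\lambda_0}$ is non-atomic (its distribution $F_{\lambda_0}$ is continuous by Lemma 2.1), weak convergence implies pointwise convergence $F_\lambda(x) \to F_{\lambda_0}(x)$ for every $x\in[0,1]$. Because each $F_\lambda$ is monotone and the limit $F_{\lambda_0}$ is continuous, Polya's theorem upgrades this to uniform convergence on $[0,1]$. A standard $\varepsilon$--$\delta$ argument, exploiting the fact that $F_{\lambda_0}^{-1}$ is continuous on $[0,1]$ (it is the inverse of a strictly increasing continuous function on a compact interval, hence uniformly continuous), then upgrades uniform convergence of $F_\lambda$ to uniform convergence of the inverses $F_\lambda^{-1} \to F_{\lambda_0}^{-1}$ on $[0,1]$.

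Finally, for $x \in [0,\tfrac{1}{2}]$ I split
$$\phi_\lambda(x)-\phi_{\lambda_0}(x)=\bigl[F_\lambda^{-1}(2F_\lambda(x))-F_{\lambda_0}^{-1}(2F_\lambda(x))\bigr]+\bigl[F_{\lambda_0}^{-1}(2F_\lambda(x))-F_{\lambda_0}^{-1}(2F_{\lambda_0}(x))\bigr].$$
The first bracket tends to $0$ by uniform convergence of $F_\lambda^{-1}$, and the second by continuity of $F_{\lambda_0}^{-1}$ together with $F_\lambda(x)\to F_{\lambda_0}(x)$. The case $x \in [\tfrac{1}{2},1]$ is handled symmetrically via $x \mapsto 1-x$ in the piecewise formula.

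The main step to get right is the passage from pointwise to uniform convergence of $F_\lambda$, and from there to uniform convergence of $F_\lambda^{-1}$; without uniformity one cannot control the nested expression $F_\lambda^{-1}(2F_\lambda(x))$, since the inner argument depends on $\lambda$. Polya's theorem combined with continuity and strict monotonicity of the limit $F_{\lambda_0}$ is precisely what makes this work, and the rest of the argument is essentially bookkeeping.
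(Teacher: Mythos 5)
Your proof is correct, and the final telescoping step is the same one the paper uses: split $F_\lambda^{-1}(2F_\lambda(x))-F_{\lambda_0}^{-1}(2F_{\lambda_0}(x))$ into a term controlled by closeness of the two inverse distribution functions and a term controlled by continuity of $F_{\lambda_0}^{-1}$ together with $F_\lambda(x)\to F_{\lambda_0}(x)$. Where you diverge is in how the inputs are obtained, and your route is the more careful one. The paper simply posits three continuity facts --- uniform continuity of $F_\lambda^{-1}$ in $x$, and pointwise (in the evaluation point) continuity of $F_\lambda(x)$ and $F_\lambda^{-1}(x)$ in $\lambda$ --- and then applies the $\lambda$-continuity of $F_\lambda^{-1}$ at the point $2F_\lambda(x)$, which itself moves with $\lambda$; strictly speaking a pointwise statement does not suffice there without some uniformity in the evaluation point. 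You close exactly that gap: the coupling of all $\nu_\lambda$ on one probability space via $X_\lambda=(\lambda^{-1}-1)\sum a_i\lambda^i$ gives weak convergence cheaply (the paper has a commented-out argument achieving the same thing via a derivative bound on $\lambda\mapsto X_\lambda$), non-atomicity of the limit plus Polya's theorem upgrades this to \emph{uniform} convergence of $F_\lambda$, and the standard inverse-function argument then yields uniform convergence of $F_\lambda^{-1}$, which is precisely the strength needed to handle the $\lambda$-dependent inner argument. So your proof buys a fully rigorous version of the paper's argument at the cost of invoking weak-convergence machinery; the paper's version is shorter but leaves the three continuity claims unproved and slightly misstates the form of $\lambda$-continuity of $F_\lambda^{-1}$ that is actually required.
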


\begin{proof}
Fix $\lambda_0\in(\frac{1}{2},1)$. We rely on three facts for this proof. 

Firstly we use that the function $F_{\lambda}^{-1}$ is continuous in $x$: for all $\epsilon_2>0$ there exists $\epsilon_1>0$ such that
\begin{equation}\label{l1}
|x-y|<2\epsilon_1 \implies |F_{\lambda}^{-1}(x)-F_{\lambda}^{-1}(y)|<\epsilon_2.
\end{equation}

Secondly we use that for each $x\in[0,1]$ the function $F_{\lambda}(x)$ is continuous in $\lambda$: for all $\epsilon_1>0$ there exists $\delta_1>0$ such that
\begin{equation}\label{l2}
|\lambda-\lambda_0|<\delta_1 \implies |F_{\lambda}(x)-F_{\lambda_0}(x)|<\epsilon_1.
\end{equation}
Finally we use that for each $x\in [0,1]$ the function $F_{\lambda}^{-1}(x)$ is continuous in $\lambda$. For all $\epsilon_3>0$ there exists a $\delta_2>0$ such that
\begin{equation}\label{l3}
|\lambda-\lambda_0|<\delta_2 \implies |F_{\lambda}^{-1}(x)-F_{\lambda_0}^{-1}(x)|<\epsilon_3.
\end{equation}
We fix $x$ and let $\delta=\min\{\delta_1,\delta_2\}$ and $|\lambda-\lambda_0|<\delta$. Then

\begin{eqnarray*}
|\phi_{\lambda}(x)-\phi_{\lambda_0}(x)|&=&|F_{\lambda}^{-1}(2F_{\lambda}(x))-F_{\lambda_0}^{-1}(2F_{\lambda_0}(x))|\\
&\leq& \sup_{2F_{\lambda}(x)-2\epsilon_1\leq y\leq 2F_{\lambda}(x)+2\epsilon_1}|F_{\lambda}^{-1}(2F_{\lambda}(x))-F_{\lambda_0}^{-1}(y)|\\
&\leq& |F_{\lambda}^{-1}(2F_{\lambda}(x))-F_{\lambda_0}^{-1}(2F_{\lambda}(x))|+\epsilon_2\\
&\leq& \epsilon_3+\epsilon_2,
\end{eqnarray*}
Here the second line holds since
$2F_{\lambda_0}(x)\in(2F_{\lambda}(x)-2\epsilon,2F_{\lambda}(x)+2\epsilon)$
by equation \ref{l2}. Then the third and fourth line follows from
equations \ref{l1} and \ref{l3} respectively. Since $\epsilon_2,
\epsilon_3$ were arbitrary, we are done.
\end{proof}

In the case that one knows the distribution $F_{\lambda}$, one can
write down the map $\phi_{\lambda}$ explicitly. In particular, for
$\lambda=2^{-\frac{1}{n}}$, it is not difficult to write down
$\phi_{\lambda}$. 

\begin{example}In the case $\lambda=\frac{1}{\sqrt{2}}$,
$F_{\lambda}$ is given by
\[
F_{\lambda}(x) = \left \lbrace \begin{array}{ll} (\frac{3}{4} \sqrt{2}
  + 1) x^2 & x \in \left[0,\frac{1}{1 + \sqrt{2}}\right]\\ (1 +
  \frac{1}{\sqrt{2}}) x - \frac{\sqrt{2}}{4} & x \in [\frac{1}{1 +
        \sqrt{2}}, \frac{\sqrt{2}}{1 + \sqrt{2}}]\\ 1 - (1 +
    \frac{3}{4} \sqrt{2} ) (1 - x)^2 & x \in[\frac{\sqrt{2}}{1 +
        \sqrt{2}},1] \end{array} \right. .
\]
Consequently $\phi_{\lambda}$ is given by
\[
\phi_{\lambda}(x) = \left\lbrace\begin{array}{ll} \sqrt{2}x
&x\in[0,\frac{1}{2 + \sqrt{2}}]\\ (1 + \sqrt{2}) x^2 + \frac{1}{2 + 2
  \sqrt{2}} & x \in \left[ \frac{1}{2 + \sqrt{2}}, \frac{1}{1 +
    \sqrt{2}} \right]\\ 1 - 2 \left( \frac{1/2 - x}{1 + \sqrt{2}}
\right)^{1/2} & x \in\left[\frac{1}{1+\sqrt{2}}, \frac{1}{2}
  \right] \end{array} \right.
\]
which is extended to the whole interval $I_{\lambda}$ using the
symmetry around $\frac{1}{2}$. We have drawn the graphs of
$F_\lambda$ and $\phi_\lambda$ in Figure~\ref{fig:example}.

\begin{figure}
\centerline{\includegraphics[scale=0.8]{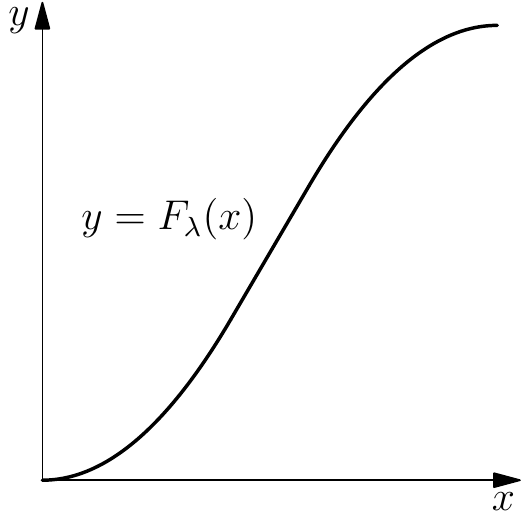} \qquad \includegraphics[scale=0.8]{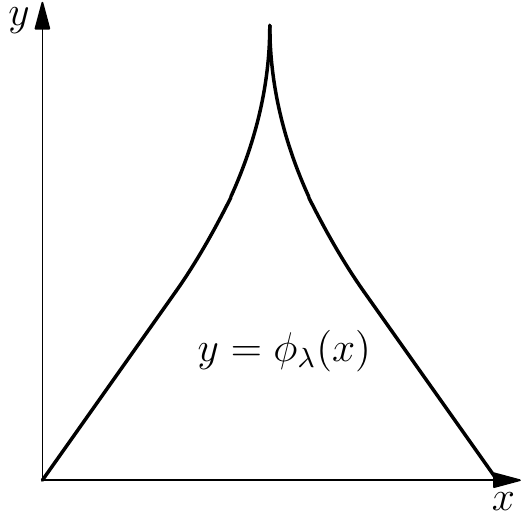}}
\caption{Graphs of $F_\lambda$ and $\phi_\lambda$ for $\lambda =
  1/\sqrt{2}$.}
\label{fig:example}
\end{figure}

\end{example}


\subsection{Further properties of $\phi_{\lambda}$}
While we cannot write down $\phi_{\lambda}$ explicitly, we can describe the behaviour near $x=0$ and the rate of the blowup at $x=\frac{1}{2}$. The following lemma describes $\phi_{\lambda}$ near $0$, and hence also the behaviour near $1$.

\begin{lemma} \label{lem:linear}
We have that 
\[
\phi_{\lambda}(x)=\lambda^{-1} x
\] 
for $x\in[0,1-\lambda]$.
\end{lemma}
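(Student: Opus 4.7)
The plan is to reduce the claim to the functional equation $F_\lambda(\lambda^{-1}x) = 2F_\lambda(x)$ on $[0, 1-\lambda]$, and then derive this equation from the self-similarity of $\nu_\lambda$.

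First, for $x \in [0,1-\lambda]$, note that $1-\lambda < \tfrac{1}{2}$ and $\lambda^{-1}x \le \lambda^{-1}(1-\lambda) = \lambda^{-1}-1 < 1$, so $x$ lies in the first branch of the definition of $\phi_\lambda$ and $\lambda^{-1}x$ lies in the domain of $F_\lambda$. Thus the conclusion $\phi_\lambda(x) = \lambda^{-1}x$ is equivalent, via the definition $\phi_\lambda(x) = F_\lambda^{-1}(2F_\lambda(x))$ and the strict monotonicity of $F_\lambda$, to the identity $F_\lambda(\lambda^{-1}x) = 2F_\lambda(x)$.

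To establish this identity, I would unfold the self-similarity relation $\nu_\lambda = \tfrac{1}{2}(\nu_\lambda\circ T_0 + \nu_\lambda\circ T_1)$ at the level of distribution functions. Applied to the set $[0,x]$, and using that $T_0^{-1}[0,x] = [0,\lambda x] + 0$ while $T_1^{-1}[0,x] = [1-\lambda, (1-\lambda)+\lambda x]$ (equivalently, writing the random variable $X$ distributed as $\nu_\lambda$ in the form $X = \lambda X' + (1-\lambda)a_1$ with $X'$ an independent copy and $a_1 \in \{0,1\}$ uniform), this yields
\[
F_\lambda(x) = \tfrac{1}{2} F_\lambda(\lambda^{-1}x) + \tfrac{1}{2} F_\lambda\bigl(\lambda^{-1}(x-(1-\lambda))\bigr),
\]
with the convention that $F_\lambda$ vanishes on $(-\infty,0)$ since $\nu_\lambda$ is supported on $[0,1]$.

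The key observation is now that for $x \in [0,1-\lambda]$ the second argument satisfies $\lambda^{-1}(x-(1-\lambda)) \le 0$, so the second term vanishes and we get exactly $F_\lambda(\lambda^{-1}x) = 2F_\lambda(x)$, completing the proof. There is no real obstacle; the only thing to be careful about is the range check $\lambda^{-1}x \in [0,1]$ and $x \in [0,\tfrac{1}{2}]$ so that we are in the correct branch of $\phi_\lambda$ and within the domain of $F_\lambda$, both of which follow from $\lambda > \tfrac{1}{2}$.
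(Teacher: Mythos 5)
Your proposal is correct and follows essentially the same route as the paper's proof: both unfold the self-similarity relation into the identity $F_\lambda(x) = \tfrac{1}{2}\bigl(F_\lambda(\lambda^{-1}x) + F_\lambda(\lambda^{-1}x - (\lambda^{-1}-1))\bigr)$ and observe that the second term vanishes for $x \le 1-\lambda$ because $F_\lambda$ vanishes on $(-\infty,0]$. Your additional range checks (that $x$ lies in the first branch and $\lambda^{-1}x$ stays in $[0,1]$) are a welcome bit of extra care but do not change the argument.
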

\begin{proof}
Self similarity of the measures $\nu_{\lambda}$ give that
\begin{equation}\label{SelfSim}
F_{\lambda}(x)=\frac{1}{2}\left(F_{\lambda}\left(\lambda^{-1} x\right)+F_{\lambda}\left(\lambda^{-1} x-(\lambda^{-1}-1)\right)\right)
\end{equation}
Then
\[F_{\lambda}(\phi_{\lambda}(x))=2F_{\lambda}(x)=F_{\lambda}(\lambda^{-1} x)+ F_{\lambda}(\lambda^{-1} x-(\lambda^{-1}-1)).\] 
But because $F_{\lambda}(x)=0$ for $x\leq 0$, we have \[F_{\lambda}(\lambda^{-1} x-(\lambda^{-1}-1))=0\] for $x\leq 1-\lambda$. Then
\[
F_{\lambda}(\phi_{\lambda}(x))=2F_{\lambda}(x)=F_{\lambda}(\lambda^{-1} x),
\]
for $x\in[0, 1-\lambda]$, which completes the proof.
\end{proof}

It remains to find $\phi_{\lambda}(x)$ for
$x\in\left[1-\lambda,\frac{1}{2}\right]$, and then by symmetry to
define $\phi_{\lambda}$ on $[\frac{1}{2},1]$. We can also describe the
nature of $\phi_{\lambda}$ around $x=\frac{1}{2}$ for typical
$\lambda$.

\begin{lemma} \label{lem:blowup}
  We have that \[\phi \left(\frac{1}{2} - x\right) \approx 1 - c x^{-
    \frac{\log \lambda}{\log 2}}\] for small $x$, where $c$ is a
  constant that depend continuously on $\lambda$.
\end{lemma}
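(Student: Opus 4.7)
The plan is to combine the defining formula for $\phi_\lambda$ with the symmetry and self-similarity of $\nu_\lambda$. The symmetry of $\nu_\lambda$ about $\tfrac{1}{2}$ gives $F_\lambda(1-y)=1-F_\lambda(y)$, hence $F_\lambda^{-1}(1-t)=1-F_\lambda^{-1}(t)$. Writing
\[
g(x):=F_\lambda(\tfrac{1}{2})-F_\lambda(\tfrac{1}{2}-x)=\nu_\lambda[\tfrac{1}{2}-x,\tfrac{1}{2}],
\]
the definition of $\phi_\lambda$ therefore rearranges to
\[
\phi_\lambda\bigl(\tfrac{1}{2}-x\bigr)=F_\lambda^{-1}\bigl(2F_\lambda(\tfrac{1}{2}-x)\bigr)=F_\lambda^{-1}\bigl(1-2g(x)\bigr)=1-F_\lambda^{-1}\bigl(2g(x)\bigr),
\]
so proving the lemma reduces to controlling $F_\lambda^{-1}$ near $0$ and $g$ near $0$.

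For $F_\lambda$ near zero, I would iterate the self-similarity identity~\eqref{SelfSim} from the proof of Lemma~\ref{lem:linear}: on a small interval around $0$ it yields $F_\lambda(\lambda^n y)=2^{-n}F_\lambda(y)$ for all $n\ge 0$. Setting $\alpha:=-\log 2/\log\lambda>1$ (forced by $\lambda^{-\alpha}=2$), the only continuous increasing solutions of this functional equation are of the form $F_\lambda(y)=y^{\alpha}h(-\log y)$ with $h$ of period $\log(1/\lambda)$; for typical $\lambda$ the factor $h$ is close to a positive constant $C_\lambda$, so $F_\lambda(y)\sim C_\lambda y^{\alpha}$, and consequently $F_\lambda^{-1}(t)\sim C_\lambda^{-1/\alpha}t^{1/\alpha}=C_\lambda^{-1/\alpha}t^{-\log\lambda/\log 2}$ as $t\to 0^+$. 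For $g$, symmetry of $\nu_\lambda$ about $\tfrac{1}{2}$ together with local absolute continuity there (for typical $\lambda$) gives a linearisation $g(x)\sim\rho_\lambda x$, where $\rho_\lambda>0$ is the density of $\nu_\lambda$ at $\tfrac{1}{2}$. Substituting,
\[
\phi_\lambda\bigl(\tfrac{1}{2}-x\bigr)\sim 1-\bigl(2\rho_\lambda/C_\lambda\bigr)^{-\log\lambda/\log 2}x^{-\log\lambda/\log 2},
\]
which is the stated asymptotic with $c=(2\rho_\lambda/C_\lambda)^{-\log\lambda/\log 2}$. Continuity of $c$ in $\lambda$ follows from continuity of $\rho_\lambda$ and $C_\lambda$ in $\lambda$, which is inherited from the continuity $\lambda\mapsto F_\lambda$ used in the previous theorem.

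The main obstacle is that each of the "$\sim$" statements above holds only \emph{up to a log-periodic oscillation}: the scaling relation only pins $F_\lambda(y)/y^{\alpha}$ down modulo a periodic function of $-\log y$ with period $\log(1/\lambda)$, and the linearisation $g(x)\sim\rho_\lambda x$ requires a two-sided density of $\nu_\lambda$ at $\tfrac{1}{2}$, which can fail at singular parameters such as Pisot reciprocals. For this reason the "$\approx$" in the lemma must be read in the loose sense of matching exponents with bounded multiplicative error, which is consistent with the authors' qualifying phrase \emph{for typical $\lambda$} in the paragraph preceding the statement.
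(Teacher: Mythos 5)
Your argument is essentially the paper's own proof, just repackaged: your linearisation $g(x)\sim\rho_\lambda x$ is their equation~\eqref{A1} (the assumption that the density $h_\lambda(\tfrac12)$ exists and is positive), your scaling law $F_\lambda(\lambda y)=\tfrac12 F_\lambda(y)$ near $0$ is their equation~\eqref{A2} stated at the other endpoint via symmetry, and composing the two asymptotics is equivalent to their iteration $\phi_\lambda(\tfrac12-2^{-n}\epsilon)\approx 1-\lambda^n\delta$. Your version is, if anything, slightly more honest, since you make explicit the log-periodic oscillation in $F_\lambda(y)/y^{\alpha}$ and the need for a genuine two-sided density at $\tfrac12$, both of which the paper's ``$\approx$'' quietly absorbs.
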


\begin{proof}
  We start by noting that, since $\phi_{\lambda}(x)$ evolves
  continuously in $\lambda$, it is enough to describe the nature of
  the blowup for values of $\lambda$ corresponding to absolutely
  continuous $\nu_{\lambda}$, as by passing to limits we get the
  result for all $\lambda$.
  
  We consider the behaviour of $F_{\lambda}(x)$ close to
  $x=\frac{1}{2}$ and to $x=1$. Assuming that
  $h_{\lambda}(\frac{1}{2})$ exists and is positive, we have that
  \[
  F_{\lambda}\left(\frac{1}{2}-\epsilon\right) \approx F_{\lambda}\left(\frac{1}{2}\right) -
  h_{\lambda}\epsilon = \frac{1}{2} - h_{\lambda}\epsilon.
  \]
  Thus we have that
  \begin{equation}\label{A1}
    \left|\frac{1}{2} - F_{\lambda} (\frac{1}{2} - \frac{1}{2}\epsilon)\right|
    \approx \frac{1}{2}\left|\frac{1}{2} - F_{\lambda} (\frac{1}{2} -
    \epsilon)\right|.
  \end{equation}
  Conversely, equation \ref{SelfSim} gives that for small $\delta$
  \begin{eqnarray*}
    F_{\lambda}(1-\delta) &=& \frac{1}{2} (1 +
    F_{\lambda}(\lambda^{-1}(1-\delta) - (\lambda^{-1} - 1)))\\ &=&
    \frac{1}{2} + \frac{1}{2} F_{\lambda} (1 - \lambda^{-1}\delta)
  \end{eqnarray*}
  giving
  \begin{equation}\label{A2}
    1-F_{\lambda}(1-\delta) = \frac{1}{2}(1-
    F_{\lambda}(1-\lambda^{-1}\delta)).
  \end{equation}
  Now suppose that
  $\phi_{\lambda}\left(\frac{1}{2}-\epsilon\right)=1-\delta$ for some fixed $\epsilon$ and $\delta$. Then by equations
  \ref{A1} and \ref{A2} we have that
  \[
  \phi_{\lambda}(\frac{1}{2}-\frac{1}{2}\epsilon)\approx 1-\lambda\delta.
  \]
  Iterating, we have
  \[
  \phi_{\lambda}\left(\frac{1}{2}-\left(\frac{1}{2}\right)^n\epsilon\right)\approx
  1-\lambda^n\delta,
  \]
  and we see that we have a blow up of the form
  \[
  \phi_{\lambda}\left(\frac{1}{2} - x\right) \approx 1 -
  \frac{\delta}{\varepsilon^{-\log \lambda / \log 2}} x^{- \frac{\log
      \lambda}{\log 2}}
  \]
  with $c = \delta \varepsilon^{\log \lambda / \log 2}$ depending
    continuously on $\lambda$.
\end{proof}


\section{Piecewise Convexity}
Numerical approximations of the maps $\phi_{\lambda}$ suggest that
there are ranges of $\lambda$ close to $1$ in which the maps
$\phi_{\lambda}$ are piecewise convex. For the value
$\lambda=\frac{1}{\sqrt{2}}$, one can see directly from the
calculation of the previous section that $\phi_{\lambda}$ is piecewise
convex.

In this section we prove various properties of $\nu_{\lambda}$ that
would follow from $\phi_{\lambda}|_{[0,\frac{1}{2}]}$ being convex. We
use the term `piecewise convex' as shorthand for the statement that
$\phi_{\lambda}$ is convex on each of the two intervals
$[0,\frac{1}{2}]$ and $[\frac{1}{2},1]$. The following theorem shows
the relevance of the piecewise convexity of $\phi_{\lambda}$ to the
study of Bernoulli convolutions.

\begin{theorem}\label{MainThm}
  Suppose that there exists an interval $(a,b)\subset (\frac{1}{2},1)$
  such that $\phi_{\lambda}$ is piecewise convex for each $\lambda$ in
  $(a,b)$. Then for each $\lambda\in(a,b)$ the Bernoulli convolution
  $\nu_{\lambda}$ is absolutely continuous with bounded density.
\end{theorem}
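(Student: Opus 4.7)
The plan is to combine Solomyak's theorem, which supplies absolute continuity of $\nu_\lambda$ for almost every $\lambda$, with a uniform $L^\infty$ bound on the density coming from piecewise convexity, and then to use the continuous dependence of $\nu_\lambda$ on $\lambda$ to extend absolute continuity from a dense set of parameters to every $\lambda \in (a,b)$.

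By Solomyak's theorem \cite{SolomyakAC}, the set $S = \{\lambda \in (a,b) : \nu_\lambda \text{ is absolutely continuous}\}$ has full Lebesgue measure in $(a,b)$, so in particular it is dense. For $\lambda \in S$ write $h_\lambda = d\nu_\lambda/dx$. The heart of the argument is a uniform bound $\|h_\lambda\|_\infty \leq C$ valid for $\lambda$ ranging over any fixed compact sub-interval of $(a,b)\cap S$.

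For this uniform bound, I would first combine piecewise convexity with Lemma~\ref{lem:linear} to observe that $\phi_\lambda$ is uniformly expanding. Indeed, Lemma~\ref{lem:linear} yields $\phi_\lambda'(0^+)=\lambda^{-1}$, and convexity propagates this to $\phi_\lambda'(x)\geq\lambda^{-1}>1$ on $[0,\tfrac12]$, with the symmetric bound on $[\tfrac12,1]$. The only irregularity of $\phi_\lambda$ is the blow-up of $\phi_\lambda'$ at $x=\tfrac12$, whose integrable power-law form (Lemma~\ref{lem:blowup}) guarantees that $1/\phi_\lambda'$ is of bounded variation on each monotonicity branch. These are precisely the hypotheses of the Lasota--Yorke / Rychlik framework for piecewise expanding interval maps. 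Since the expansion rate $\lambda^{-1}$ can be below $2$, the one-step Lasota--Yorke inequality does not immediately contract the $\mathrm{BV}$ semi-norm, but passing to a sufficiently high iterate $\phi_\lambda^n$ (with $n$ uniform in $\lambda$) restores contraction. A standard Lasota--Yorke argument then shows that every $\phi_\lambda$-invariant probability density is of bounded variation, with $\mathrm{BV}$-norm controlled by constants depending continuously on $\lambda$. Applied to $h_\lambda$, which is $\phi_\lambda$-invariant by the earlier invariance theorem, this yields $\|h_\lambda\|_\infty \leq C$ uniformly on compact sub-intervals of $(a,b)$.

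To pass to the limit, fix $\lambda_0 \in (a,b)$ and pick $\lambda_n \in S$ with $\lambda_n \to \lambda_0$. Pointwise convergence of the Fourier transforms of $\nu_\lambda$ in $\lambda$ gives $\nu_{\lambda_n} \to \nu_{\lambda_0}$ weakly. The uniformly bounded sequence $h_{\lambda_n}\in L^\infty$ has, by Banach--Alaoglu, a weak-$*$ convergent subsequence with limit $h_0 \in L^\infty$ satisfying $\|h_0\|_\infty \leq C$; the limit measure $h_0\,dx$ must coincide with $\nu_{\lambda_0}$. Hence $\nu_{\lambda_0}$ is absolutely continuous with bounded density, as desired. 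The main obstacle is clearly the uniform $L^\infty$ bound of the middle paragraph: the expansion rate $\lambda^{-1}\in(1,2)$ is borderline for Lasota--Yorke, and the singularity of $\phi_\lambda'$ at $\tfrac12$ must be tracked carefully through iterates of $\phi_\lambda$, all uniformly in $\lambda$ on compact sub-intervals of $(a,b)$.
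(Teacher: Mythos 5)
Your proposal follows essentially the same route as the paper: piecewise convexity is used to make $1/|\phi_\lambda'|$ a function of bounded variation with supremum $\lambda$ on each branch, Rychlik's Lasota--Yorke-type inequality applied to a high enough iterate (to beat the fact that $\lambda^{-1}<2$) gives a bound on the invariant density that is continuous in $\lambda$, and this is combined with Solomyak's almost-everywhere absolute continuity and weak-$*$ limits exactly as in the paper. The only cosmetic difference is that the paper derives bounded variation of $1/|\phi_\lambda'|$ directly from the monotonicity of the derivative forced by convexity (the blow-up lemma only enters to show the variation equals $\lambda$), rather than from the power-law form of the singularity at $\tfrac12$.
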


We stress that if $\phi_{\lambda}$ is piecewise convex for almost
every $\lambda$ in $(a,b)$, then it is piecewise convex for all
$\lambda$ in $(a,b)$, since the maps $\phi_{\lambda}$ are continuous
in $\lambda$ and convexity is preserved by passing to continuous
limits.

\begin{proof}
  This theorem relies on results of Rychlik \cite{rychlik}.

  Given a function $g:[0,1]\to\mathbb R$, we define the total variation of $g$ by
  \[
  \var g :=\sup_{0=x_0<x_1<\cdots<x_n=1} \sum_{i=1}^n
  |g(x_i)-g(x_{i-1})|.
  \]

  The function $g$ is said to have bounded variation if $\var
  g<\infty$. Suppose that $T: [0,1] \to [0,1]$ is a piecewise
  continuous map, such that there exists a function $g$ of bounded
  variation satisfying $g = 1 / |T'|$ almost everywhere. We consider
  the transfer operator $L$ defined on functions of bounded variation
  by
  \[
  L f (x) = \sum_{T(y) = x} g(y) f(y).
  \]
  We put $g_n = g \cdot (g \circ T) \cdots (g \circ T^{n-1})$. Then
  \[
  L^n f (x) = \sum_{T^n (y) = x} g_n(y) f(y).
  \]

  Let $C_n$ denote the $(n-1)$th refinement of partition $\{[0,\frac{1}{2}],(\frac{1}{2},1]\}$ by $T$.
  

  In \cite{rychlik}, Rychlik proved that
  \begin{equation} \label{eq:rychlik}
    \var L^n f \leq \kappa \var f + D \lVert f \rVert_1,
  \end{equation}
  where $\kappa = \sup g_n + \max_{C_n} \var_{C_n} g_n$ and $D =
  \max_{C_n} \var_{C_n} g_n / |C_n|$.

  We can apply this to our tent maps, replacing $T$ with
  $\phi_{\lambda}$. Suppose that the tent map is convex on each of the
  intervals $[0, \frac{1}{2}]$ and $[\frac{1}{2}, 1]$. Then
  $\phi_\lambda$ is differentiable everywhere except for at most
  countably many points, and this derivative is increasing on
  $[0,\frac{1}{2})$ and on $(\frac{1}{2},1]$. So there exists a
  function $g$ which is of bounded variation, which satisfies the
  assumptions of \cite{rychlik}, and which satisfies $g =
  \frac{1}{|\phi_\lambda'|}$ almost everywhere. We have
  \[
  \sup g = g(0) = g(1) = \lambda \qquad \text{and} \qquad \var_{[0,
      \frac{1}{2}]} g = \var_{[\frac{1}{2}, 1]} g \leq \lambda,
  \]
  with equality if and only if $|\phi_\lambda ' (x)| \to \infty$ when
  $x \to \frac{1}{2}$ (which is the case by
  Lemma~\ref{lem:blowup}). From this we get that
  \[
  \sup g_n = \lambda^n \qquad \text{and} \qquad \var_{C_n} g_n \leq
  2^{n-1} \lambda^n,
  \]
  Combining this with \eqref{eq:rychlik} we get that
  \begin{equation} \label{eq:rychlik_lambda}
    \var L^n f \leq 2 \lambda^n \var f + \frac{2^{n-1} \lambda^n}{\min
      |C_n|} \lVert f \rVert_1.
  \end{equation}

  In the setting of our tent map, $C_n$ corresponds to the cylinders
  of generation $n$, and so $C_n$ depends continuously on $n$.
  In particular, for each $n\in\mathbb N$ the value of $\min |C_n|$
  corresponding to $\phi_{\lambda}$ is continuous in $\lambda$.

  By Rychlik \cite{rychlik}, there is a unique non-negative function
  $h_{\lambda}$ of bounded variation, such that $\lVert h_{\lambda}
  \rVert_1 = 1$ and $L h_{\lambda} = h_{\lambda}$. The function
  $h_{\lambda}$ is the density of the unique absolutely continuous
  invariant measure of $\phi_{\lambda}$. If we pick $n$ such that $2
  \lambda^n < 1$, then \eqref{eq:rychlik_lambda} implies that
  \[
  \var h_{\lambda} \leq 2 \lambda^n \var h_{\lambda} + \frac{2^{n - 1}
    \lambda^n}{\min |C_n|},
  \]
  giving
  \[
  \var h_{\lambda} \leq \frac{2^{n - 1} \lambda^n}{\min |C_n|}
  \frac{1}{1 - 2 \lambda^n}.
  \]
  Hence we have that
  \[
  \sup h_{\lambda} \leq 1 + \frac{2^{n - 1} \lambda^n}{\min |C_n|}
  \frac{1}{1 - 2 \lambda^n},
  \]
  and so $h_{\lambda}$ is bounded. Furthermore, since all of the
  quantities involved are continuous in $\lambda$, there is a uniform
  bound on $\sup h_{\lambda}$ across all of $(a,b)$.

  Now we recall that $h_{\lambda}$ was the density of the absolutely
  continuous invariant measure of $\phi_{\lambda}$. But for almost
  every $\lambda\in(a,b)$, the Bernoulli convolution $\nu_{\lambda}$
  is absolutely continuous and is preserved by $\phi_{\lambda}$, and
  so $h_{\lambda}$ is the density of $\nu_{\lambda}$. But now any
  weak$^*$ limit point of a family of measures which is absolutely
  continuous with uniformly bounded density must also be absolutely
  continuous with the same bound on the density. Therefore, since the
  family $\nu_{\lambda}$ evolves continuously (in the weak$^*$
  topology), we see that $\nu_{\lambda}$ is absolutely continuous for
  all $\lambda\in(a,b)$ and has bounded density $h_{\lambda}$.
\end{proof}

\section{Computational Techniques}

We have seen in the previous section that showing that
$\phi_{\lambda}$ is piecewise convex for all $\lambda$ in an interval
would have significant consequences for Bernoulli
convolutions. Analytically, we have been able to show piecewise
convexity only for some special values of $\lambda$ for which the
distribution $F_{\lambda}$ is already known. We remain optimistic that
some further progress could be made here, see the comments section. In
this section we show how numerical information on $\phi_{\lambda}$ can
show convexity up to a certain scale.

\subsection{Showing convexity up to a certain scale for fixed $\lambda$}

First we choose a natural number $M$ and let $x_i$ denote the point
$\frac{i}{M}$ for $i\in\{0,\ldots ,M\}$. We wish to show that

\begin{equation} \label{eq:convextoascale}
  \phi_{\lambda}(x_i) \leq \frac{1}{2} (\phi_{\lambda}(x_{i-1}) +
  \phi_{\lambda}(x_{i+1})),
\end{equation}

for $i < \frac{M}{2}$ and $\lambda$ in a certain parameter range. It
will then follow that
\[
\phi_{\lambda}(x_j)\leq \left(\frac{j-i}{k-i}\right)
\phi_{\lambda}(x_k) + \left(\frac{k-j}{k-i}\right) \phi_{\lambda}(x_i)
\]
for $0\leq i\leq j\leq k\leq \frac{M}{2}$. This is what we call
`convexity up to scale $\frac{1}{M}$'. It corresponds to the usual
definition of convexity restricted to the set of points $\{x_0,
\ldots, x_{\frac{M}{2}}\}$, using the fact that
\[
x_j=\left(\frac{j-i}{k-i}\right)x_k+\left(\frac{k-j}{k-i}\right)x_i.
\]

Because Lemma~\ref{lem:linear} that tells us that $\phi_\lambda
(x) = \lambda^{-1} x$ for $0 \leq x \leq 1- \lambda$, we only need to
check \eqref{eq:convextoascale} for $i$ with $1-\lambda < x_i <
\frac{M}{2}$.

For large $L$ we estimate $F_{\lambda}(x)$ by noting that
\[
F_{\lambda}(x) \leq F_{\lambda,L}^+ (x) := 2^{-L} \left|\{a_1\cdots
a_L\in\{0,1\}^L : (\lambda^{-1} - 1) \sum_{i=1}^L a_i\lambda^i \leq
x\}\right|
\]
and
\begin{align*}
  F_{\lambda}(x) &\geq F_{\lambda,L}^- (x) := 2^{-L} \left|\{a_1\cdots
  a_L\in\{0,1\}^L : (\lambda^{-1} - 1) \sum_{i=1}^L a_i\lambda^i \leq
  x - \lambda^L \}\right| \\ &= F_{\lambda, L}^+ (x - \lambda^{-L} ).
\end{align*}

Then given the values of $F_{\lambda,L}^+ (x_i)$ and $F_{\lambda,L}^-
(x_i)$ for $i\in\{1,\ldots, M\}$, we can bound $\phi_\lambda$ from
below and above by
\[
\phi_{\lambda,L}^- (x_i) \leq \phi_\lambda (x_i) \leq \phi_{\lambda,L}^+
(x_i),
\]
where $\phi_{\lambda,L}^-$ and $\phi_{\lambda,L}^+$ are defined for $0
\leq x_i \leq 1/2$ by
\begin{align*}
  \phi_{\lambda,L}^- (x_i) &= y && \text{where $y$ is the largest $y$
    such that} & F_{\lambda,L}^+ (y) &\leq 2 F_{\lambda,L}^- (x_i),
  \\ \phi_{\lambda,L}^+ (x_i) &= y && \text{where $y$ is the
    smallest $y$ such that} & F_{\lambda,L}^- (y) &\geq 2
  F_{\lambda,L}^+ (x_i).
\end{align*}

Hence, if we have 
\begin{equation} \label{eq:estimateconvexity}
  \phi_\lambda^+ (x_i) \leq \frac{1}{2} ( \phi_\lambda^- (x_{i-1}) +
  \phi_\lambda^- (x_{i+1})),
\end{equation}
for all $i$ with $1 - \lambda < x_i < 1/2$, then
\eqref{eq:convextoascale} holds for all $i < \frac{M}{2}$.

The inequalities \eqref{eq:estimateconvexity} can be checked with a
computer, at least up to errors in the floating point arithmetics. We
have written a program in C\footnote{The source code for this program
  is available on the homepage of the second author,
  http://www.maths.lth.se/matematiklth/personal/tomasp/}, that
calculates the approximations $\phi_{\lambda,L}^-$ and
$\phi_{\lambda,L}^+$ of $\phi_{\lambda,L}$. Figure~\ref{fig:plot}
shows four plots of the approximations, obtained from the mentioned
program.

\begin{figure}
\includegraphics[scale=0.8]{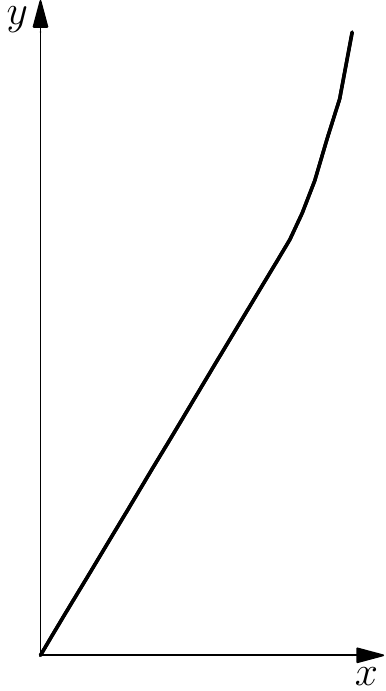} \hspace{\stretch{1}}
\includegraphics[scale=0.8]{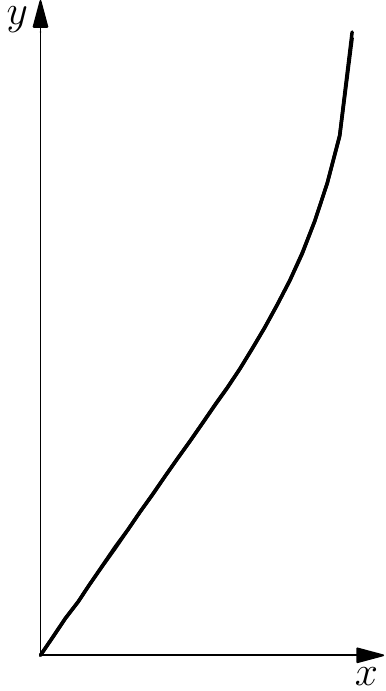} \hspace{\stretch{1}}
\includegraphics[scale=0.8]{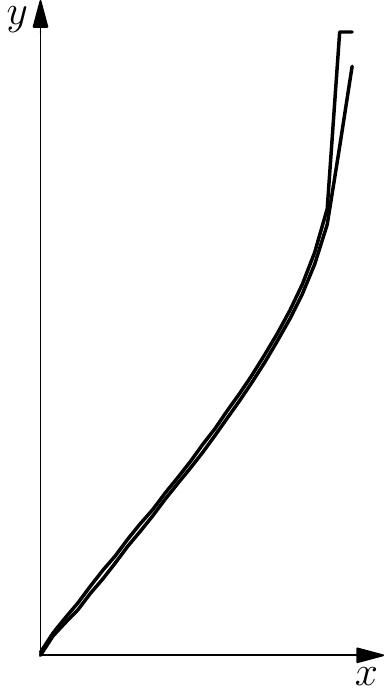} \hspace{\stretch{1}}
\includegraphics[scale=0.8]{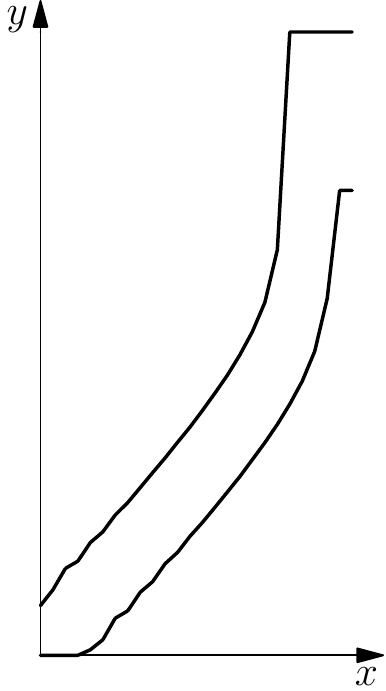}
\caption{Plot of $\phi_{\lambda,L}^-$ and $\phi_{\lambda,L}^+$ with
  $L=24$, $M=50$, and $\lambda = 0.6$ (left), $\lambda =
  0.7$, $\lambda = 0.8$ and $\lambda = 0.9$ (right).}
\label{fig:plot}
\end{figure}

\subsection{Techniques for all $\lambda$ in an interval}
To apply Theorem \ref{MainThm} we would like to show that
$\phi_{\lambda}$ is piecewise convex for all $\lambda$ in an
interval. We cannot do this computationally, but instead consider how
to show $\phi_{\lambda}$ is piecewise convex up to a certain scale for
all $\lambda$ in an interval.

We consider a small interval $I_\epsilon = [\lambda_0 - \epsilon,
\lambda_0 + \epsilon]$. The map
\[
I_\epsilon \ni \lambda \mapsto (\lambda^{-1} - 1) \sum_{i=1}^\infty
a_i \lambda^i
\]
is differentiable, and if we put $D := \frac{1}{(\lambda_0 + \epsilon)
  (1 - \lambda_0 + \epsilon)}$, then
\begin{align*}
  \biggl| \frac{\mathrm{d}}{\mathrm{d} \lambda} (\lambda^{-1} - 1)
  \sum_{i=1}^\infty a_i \lambda^i \biggr| &= \biggl|
  -\frac{1}{\lambda^2} \sum_{i=1}^\infty a_i \lambda^i + (\lambda^{-1}
  - 1) \sum_{i=1}^\infty a_i i \lambda^{i-1} \biggr| \\ &\leq \max
  \biggl\{\frac{1}{\lambda^2} \sum_{i=1}^\infty a_i \lambda^i,
  (\lambda^{-1} - 1) \sum_{i=1}^\infty a_i i \lambda^{i-1} \biggr\}
  \\ &\leq \max \biggl\{\frac{1}{\lambda^2} \sum_{i=1}^\infty
  \lambda^i, (\lambda^{-1} - 1) \sum_{i=1}^\infty i \lambda^{i-1}
  \biggr\} \\ &= \max \Bigl\{\frac{1}{\lambda (1 - \lambda)},
  \frac{1}{\lambda (1 - \lambda)} \Bigr\} = \frac{1}{\lambda (1 -
    \lambda)} \leq D,
\end{align*}
holds for all $\lambda \in I_\epsilon$ and all sequences with $a_i \in
\{0,1\}$. We conclude that
\[
(\lambda_0^{-1} - 1) \sum_{i=1}^L a_i \lambda_0^i \leq x \qquad
\Longrightarrow \qquad (\lambda^{-1} - 1) \sum_{i=1}^L a_i \lambda^i
\leq x + |\lambda - \lambda_0| D
\]
for any $\lambda \in I_\epsilon$. Similarly, we have
\[
(\lambda_0^{-1} - 1) \sum_{i=1}^L a_i \lambda_0^i \geq x \qquad
\Longrightarrow \qquad (\lambda^{-1} - 1) \sum_{i=1}^L a_i \lambda^i
\geq x - |\lambda - \lambda_0| D
\]
for any $\lambda \in I_\epsilon$. Using these two estimates we can
use $F_{\lambda_0,L}^\pm$ to estimate $F_{\lambda,L}^\pm$. We get
\[
F_{\lambda,L}^- (x) \geq F_{\lambda_0,L}^- (x - \epsilon D) \qquad
\text{and} \qquad F_{\lambda,L}^+ (x) \leq F_{\lambda_0,L}^+ (x +
\epsilon D).
\]

Hence, the estimates $F_{\lambda_0,L}^\pm$ of $F_{\lambda_0}$ gives us
estimates on $F_{\lambda,L}^\pm$ that we can use to estimate
$\phi_{\lambda,L}^-$ from below and $\phi_{\lambda,L}^+$ from above.
It is then possible to check with a computer if the inequalities in
\eqref{eq:convextoascale} are satisfied for all $\lambda \in
I_\epsilon$. This has been implemented in our program.

Table~\ref{tab:convexity} shows some result of our program. It
displays some values for which we have been able to show nummericaly
convexity to a certain scale.

\begin{table}
\begin{center}
  \begin{tabular}{l|l|c}
    $\lambda_0$ & $\epsilon$ & convexity to scale \\
    \hline
    0.65 & 0.000001 & 0.02 \\
    0.7 & 0 & 0.02 \\
    $2^{-1/2} \approx 0.707106781186548$ & 0.00001 & 0.125 \\
    0.75 & 0 & 0.02 \\
    0.75 & 0.00001 & 0.125 \\
    $2^{-1/3} \approx 0.793700525984100$ & 0.00001 & 0.125 \\
    0.8 & 0 & 0.02 \\
    0.8 & 0.00001 & 0.125 \\
    0.85 & 0.000001 & 0.125 \\
  \end{tabular}
  \caption{Numerical observations of piecewise convexity to a
    scale} \label{tab:convexity}
\end{center}
\end{table}

A convolution argument shows that $h_\lambda$ is differentiable for almost
all $\lambda \in (2^{-\frac{1}{3}}, 1)$, see \cite{SolomyakAC}. One might suspect that using
this information it would be possible to show that $\phi_\lambda$ is
piecewise convex for all $\lambda \in [2^{-\frac{1}{3}}, 1)$. However,
  this does not seem to be true, since just as we can sometimes show
  convexity to a scale using numerics, we are sometimes also capable
  of observing non-convexity at a certain scale.

Using our program we have observed that $\phi_\lambda$ is not
piecewise convex when $\lambda$ is the inverse of the root of $x^5 +
x^4 -x^2 - x -1$ that is larger than 1. We then have $\lambda \approx
0.8501\ldots$, and since $2^{-\frac{1}{3}} \approx 0.7937\ldots$, we
do not have piecewise convexity of $\phi_\lambda$ for all $\lambda \in
[2^{-\frac{1}{3}}, 1)$. In this case, $1/\lambda$ is a Salem number.

Similarly, the program can be used to show that $\phi_\lambda$ is not
convex for $\lambda = \frac{\sqrt{5}-1}{2}$. Since $\nu_\lambda$ is
known not to be absolutely continuous for this value of $\lambda$,
this is not too surprising. We also see a lack of convexity for $1/\lambda$ equal to certain other Pisot
numbers. For instance when $1/\lambda$ is the root of $x^4 - x^3 - 1$ or $x^3 - x - 1$, then $\phi_\lambda$
is not convex to scale $0.005$.


Let us mention some of the computational difficulties associated with
trying to prove convexity to a scale for the entire interval
$I_\epsilon$. Suppose $L$ is even. Our program calculates all the
$2^{L/2}$ sums $\sum_{i=1}^{L/2} a_i \lambda^i$ and stores them in an
ordered list. This requires quite a lot of memory even for $L$ as
small as $60$, but the time required to preform the calculations is
rather short. The sums in the list are then combined to get the sums
$\sum_{i=1}^{L} a_i \lambda^i$ with double as many terms, when
needed. This method of storing only the sums of length $L/2$ instead
of storing the sums of length $L$, saves memory but increases
computation time. However we found that doing so yields a better
balance between the use of memory and the computation time.

When $\lambda$ is close to $1$, large values of $L$ are needed to get
a good accuracy in the estimates, requiring an unrealistic amount of
memory. This is clearly illustrated in Figure~\ref{fig:plot}, where
for $\lambda = 0.9$, the two maps $\phi_{\lambda,L}^+$ and
$\phi_{\lambda,L}^-$ differ quite a lot, while for $\lambda = 0.6$
they are indistinguishable.

With a computer with 64 GB of memory we are able to run our program
for $L \leq 60$. Table~\ref{tab:convexity} was obtained from running
the program with $56 \leq L \leq 60$.





\section{Further Questions}
There are a number of natural questions that follow on from our work.

{\bf Question 1:} Can one show that for all $\lambda$ sufficiently
close to $1$ we have that $\nu_{\lambda}$ is absolutely continuous?
For almost all $\lambda\in (2^{-1/n},1)$ one has that the density
$h_{\lambda}$ is $(n-1)$-times differentiable, does this extra
regularity of the density give rise to extra regularity in the
functions $\phi_{\lambda}$?

{\bf Question 2:} Can one show that there is an interval $J\subset
\mathbb R$ containing $1/\sqrt{2}$ such that $\nu_{\lambda}$ is
absolutely continuous for all $\lambda\in J$. Perhaps this would
involve showing that the map $\phi_{\lambda}$ evolves smoothly in a
neighbourhood of $1/\sqrt{2}$.


{\bf Question 3:} Are there any other properties of $\nu_{\lambda}$
(besides the question of absolute continuity) which could be studied
using $\phi_{\lambda}$? In particular, can one forbid the possibility
of singular Bernoulli convolutions which have Hausdorff dimension 1 by
proving a similar result for invariant measures of $\phi_{\lambda}$?
Such results do exist in the literature for one-dimensional dynamics,
see e.g. \cite{Ledrappier} and \cite{Dobbs}, but at present are not in
such a form that they would apply to $\phi_{\lambda}$.

\section*{Acknowldgements}
Many thanks to Evgeny Verbitskiy for suggesting to us that piecewise
convexity of $\phi_{\lambda}$ would be an interesting property to
study. The first author was supported by the EPSRC, grant number EP/K029061/1.


\bibliographystyle{plain} 
\bibliography{betaref}

\begin{thebibliography}{10}

\bibitem{BenjaminiSolomyak}
Itai Benjamini and Boris Solomyak.
\newblock Spacings and pair correlations for finite {B}ernoulli convolutions.
\newblock {\em Nonlinearity}, 22(2):381--393, 2009.

\bibitem{Calkin1}
Neil~J. Calkin, Julia Davis, Michelle Delcourt, Zebediah Engberg, Jobby Jacob,
  and Kevin James.
\newblock Discrete {B}ernoulli convolutions: an algorithmic approach toward
  bound improvement.
\newblock {\em Proc. Amer. Math. Soc.}, 139(5):1579--1584, 2011.

\bibitem{Calkin2}
Neil~J. Calkin, Julia Davis, Michelle Delcourt, Zebediah Engberg, Jobby Jacob,
  and Kevin James.
\newblock Taking the convoluted out of {B}ernoulli convolutions: a discrete
  approach.
\newblock {\em Integers}, 13:Paper No. A19, 12, 2013.

\bibitem{Dobbs}
Neil Dobbs.
\newblock On cusps and flat tops.
\newblock {\em Annales de L'Institut Fourier}, to appear.

\bibitem{ErdosPisot}
Paul Erd{\"o}s.
\newblock On a family of symmetric {B}ernoulli convolutions.
\newblock {\em Amer. J. Math.}, 61:974--976, 1939.

\bibitem{FreitasTodd}
Jorge~Milhazes Freitas and Mike Todd.
\newblock The statistical stability of equilibrium states for interval maps.
\newblock {\em Nonlinearity}, 22(2):259--281, 2009.

\bibitem{GarsiaAC}
Adriano~M. Garsia.
\newblock Arithmetic properties of {B}ernoulli convolutions.
\newblock {\em Trans. Amer. Math. Soc.}, 102:409--432, 1962.

\bibitem{JessenWintner}
B{\o}rge Jessen and Aurel Wintner.
\newblock Distribution functions and the {R}iemann zeta function.
\newblock {\em Trans. Amer. Math. Soc.}, 38(1):48--88, 1935.

\bibitem{Lalley}
Steven~P. Lalley.
\newblock Random series in powers of algebraic integers: {H}ausdorff dimension
  of the limit distribution.
\newblock {\em J. London Math. Soc. (2)}, 57(3):629--654, 1998.

\bibitem{Ledrappier}
Fran{\c{c}}ois Ledrappier.
\newblock Some properties of absolutely continuous invariant measures on an
  interval.
\newblock {\em Ergodic Theory Dynamical Systems}, 1(1):77--93, 1981.

\bibitem{rychlik}
Marek Rychlik.
\newblock Bounded variation and invariant measures.
\newblock {\em Studia Math.}, 76(1):69--80, 1983.

\bibitem{ShmerkinAC}
Pablo {Shmerkin}.
\newblock {On the exceptional set for absolute continuity of Bernoulli
  convolutions}.
\newblock {\em ArXiv e-prints}, March 2013.

\bibitem{SolomyakAC}
Boris Solomyak.
\newblock On the random series {$\sum\pm\lambda^n$} (an {E}rd{\H o}s problem).
\newblock {\em Ann. of Math. (2)}, 142(3):611--625, 1995.

\end{thebibliography}

\end{document}